\newtheorem{theorem}{Theorem}    % Standard theorem environment
\newtheorem{proposition}{Proposition}
\newtheorem{corollary}{Corollary}
\theoremstyle{definition}
\newtheorem{definition}[theorem]{Definition}
\newtheorem{remark}[theorem]{Remark}
\newtheorem*{remark*}{Remark}
\newcommand{\Z}{\mathbb{Z}}
\newcommand{\Q}{\mathbb{Q}}
\DeclareMathOperator{\breadth}{breadth}
\title[Knots with the same coefficient HOMFLY polynomials]{Knots whose braided satellite have the same HOMFLY polynomial up to given $z$-degrees}
\author[T.Ito]{Tetsuya Ito}
\address{Department of Mathematics, Kyoto University, Kyoto 606-8502, JAPAN}
\email{tetitoh@math.kyoto-u.ac.jp}
\begin{document}

\begin{abstract}
For a given knot $K$ and $w>0$, we construct infinitely many mutually distinct hyperbolic knots $\{K_i\}$ such that the $P$-satellites of $K$ and $K_i$ have the same HOMFLY polynomial up to given $z$-degrees, for all braided patterns $P$ with winding number less than or equal to $w$.
\end{abstract}

\maketitle

\section{Introduction}
Let $P_K(v,z)=\sum_{j=0}^{d}P^{2j}_K(v)z^{2j}$ be the HOMFLY polynomial of a knot $K$ defined by the skein relation
\[v^{-1}P_{K_+}(v,z) - vP_{K_-}(v,z)= zP_0(v,z), P_{\sf Unknot}(v,z)=1\]
We call $P^{2j}_K(v)$ the \emph{$2j$-th coefficient (HOMFLY) polynomial} of $K$.

Although it is widely open whether there exists a non-trivial knot with trivial HOMFLY polynomial, for a given knot $K$ and $N\geq 0$, there exist infinitely many knots whose $2j$-th coefficient polynomial is the same as $P^{2j}_K(v)$, for all $j=0,1,\ldots,N$ \cite{ka,mi}. In particular, there are infinitely many distinct knots having the trivial $2j$-th coefficient polynomial. Furthermore, the famous Kanenobu knots give infinitely many knots having the same HOMFLY polynomial \cite{kan}. 

Knots having the same polynomial invariant usually have different polynomial invariants after taking cables. For example, the $(2,1)$-cables of the Kanenobu knots have different $0$-th coefficient polynomial \cite{ta1}.

For the cabled invariant, an invariant for the cable of the original knot, similar problems remain widely open. Infinitely many knots whose cables have the same HOMFLY polynomial are not known. As far as the author knows, it is open whether there exist infinitely many distinct knots having the same $p$-colored Jones polynomial (the $p$-colored Jones polynomial is essentially the Jones polynomial of $(p-1,1)$-cable), even though it is not hard to find a pair of knots having the same colored Jones polynomial.

The aim of this paper is to give a construction of infinitely many knots whose braided satellites have the same coefficient polynomials up to given $z$-degrees.

A pattern $P=(V,k)$ is a pair consisting a solid torus $V=S^{1} \times D^{2}$ and a knot $k$ inside $V$. We say that $P$ is \emph{braided} if $k$ transverse all the meridional disks $\{pt\}\times D^{2}$ in the same sign, i.e., $k$ forms a closed braid inside $V$. The \emph{winding number} $w(P)$ is the algebraic crossing number of the meridional disk and $k$.

\begin{definition}
For a knot $K$, the \emph{$P$-satellite} $K_P$ is the knot $f(k)$, where $f: V \rightarrow S^{3}$ is a map that sends $V$ to a tubular neighborhood $N(K)$ of $K$ so that the meridian and the longitudes of $V$ are sends to the meridian and longitude of $K$, respectively.
\end{definition}

\begin{theorem}
\label{theorem:main}
For every knot $K$, $N>0$ and $w>0$,
there exists infinitely many mutually distinct hyperbolic knots $\{K_i\}$ such that  $P^{2j}_{K_P}(v)=P^{2j}_{(K_i)_P}(v)$ holds for all $j=0,1,\ldots, N$ and all braided patterns $P$ with $w(P)\leq w$.
\end{theorem}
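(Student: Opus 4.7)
My approach has three steps: (i) use the HOMFLY skein of the annulus to reduce to a \emph{finite} set of cabled HOMFLY invariants; (ii) construct the $K_i$ by a local tangle replacement designed to preserve these invariants modulo $z^{2N+2}$; and (iii) ensure hyperbolicity and mutual distinctness of the $K_i$.

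\emph{Step 1 (Reduction via the skein of the annulus).} The polynomial $P_{K_P}(v,z)$ depends on $P$ only through its class in the HOMFLY skein $\mathcal{C}=\mathcal{C}(S^{1}\times D^{2})$. Let $\mathcal{C}_{\le w}\subseteq\mathcal{C}$ be the submodule spanned by braided patterns of winding number at most $w$. By the structure theory of $\mathcal{C}$ (Turaev, Morton), $\mathcal{C}_{\le w}$ is finitely generated over $\Z[v^{\pm 1},z^{\pm 1}]$, with a basis $\{e_{\alpha}\}_{\alpha\in A}$ indexed by partitions of size at most $w$. Any braided $P$ with $w(P)\le w$ then satisfies
\[
P_{K_P}(v,z)=\sum_{\alpha\in A}c_{\alpha}(P;v,z)\,P_{K_{e_{\alpha}}}(v,z),
\]
where the $c_{\alpha}(P;v,z)$ are independent of $K$. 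Truncating modulo $z^{2N+2}$ reduces Theorem~\ref{theorem:main} to the \emph{finite} condition that $P_{K_{e_{\alpha}}}(v,z)\equiv P_{(K_i)_{e_{\alpha}}}(v,z)\pmod{z^{2N+2}}$ for every $\alpha\in A$.

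\emph{Step 2 (Tangle replacement).} I would construct $K_i$ from $K$ by a local move inside a pure (unbraided) $w$-parallel region of $K$, namely the replacement of a trivial tangle by an iterated pure-braid commutator on $w$ strands of sufficiently large \emph{depth} $n=n(N,w,|A|)$. In Habiro's clasper calculus such a replacement realises a $C_{n}$-move, hence preserves every Vassiliev invariant of order $<n$. Since each $v$-coefficient of $P_{K_{e_{\alpha}}}(v,z)\bmod z^{2N+2}$ is a Vassiliev invariant of $K$ whose order is bounded in terms of $N$, $w$ and the $v$-breadth of $P_{K_{e_{\alpha}}}$ (the latter controlled by the Morton--Franks--Williams inequality applied to $K_{e_{\alpha}}$), a sufficiently deep commutator annihilates the difference of all of them. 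Varying the pure braids inside the commutator gives an infinite family of candidates $\{K_i\}$.

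\emph{Step 3 (Hyperbolicity and distinctness); main obstacle.} For infinitely many hyperbolic, pairwise distinct $K_i$, I would perform the insertion inside a fixed hyperbolic 2-tangle complement and vary the insertion by hyperbolic Dehn fillings on an auxiliary link, so that Thurston's theorem yields hyperbolic $K_i$ with hyperbolic volumes tending to infinity. The hardest point is Step~2: one must verify that the local move in $K$, when transported through every $e_{\alpha}$-satellite, still induces a deep-enough $C_{n'}$-move on the cabled knot. A single crossing change in $K$ becomes $w^{2}$ crossing changes in a $w$-cable, so cabling does not respect the Vassiliev filtration in a naive way. The reason the plan works is that the commutator is inserted in a \emph{pure} region of $K$: the $e_{\alpha}$-satellite of the local move is again a pure-braid commutator (on $w\cdot|\alpha|$ strands) of the same combinatorial depth, so $C_{n}$-equivalence — and hence vanishing of the relevant cabled Vassiliev invariants — is preserved under the satellite operation.
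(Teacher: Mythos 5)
Your Step 2 contains the paper's central mechanism (insert a pure-braid commutator of large lower-central-series depth; observe that cabling is a group homomorphism so the inserted element stays in $\Gamma^n$ of the larger pure braid group, hence the satellites are again $n$-equivalent; control everything via the Morton--Franks--Williams inequality), and that part is sound. But there are two genuine gaps. First, the assertion that ``each $v$-coefficient of $P_{K_{e_\alpha}}(v,z) \bmod z^{2N+2}$ is a Vassiliev invariant of $K$ whose order is bounded'' is false as stated: the individual coefficients of $v^a z^{2j}$ in the HOMFLY polynomial are \emph{not} finite type invariants (if they were, bounded-order finite type invariants would detect $\max\deg_v P_K$, which grows without bound on, say, torus knots of bounded crossing change distance). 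What is finite type is $h^{2j}_i$, the $i$-th derivative-type coefficient after substituting $v=e^x$, of order $2j+i$. To recover the actual polynomial $P^{2j}(v)$ from finitely many of these you need an inversion argument: the breadth bound from MFW tells you there are at most $b+1$ unknown coefficients, and a Vandermonde determinant shows that the first $\sim b$ of the $h^{2j}_i$ determine them (Proposition~\ref{prop:algebra} and Theorem~\ref{theorem:HtoP} in the paper). You gesture at the breadth bound but the inversion step, which is the technical heart of the reduction to finite type invariants, is missing. Relatedly, your Step 1 is unnecessary (the braid index bound $b(K_P)\leq m\,w(P)$ already handles all braided patterns of winding number $\leq w$ uniformly, with no need to decompose in the skein of the annulus) and has its own problem: the change-of-basis coefficients $c_\alpha(P;v,z)$ lie in $\Z[v^{\pm1},z^{\pm1}]$ and can carry negative powers of $z$, so truncating the $P_{K_{e_\alpha}}$ modulo $z^{2N+2}$ does not obviously determine $P_{K_P}$ modulo $z^{2N+2}$ uniformly over the infinitely many patterns $P$.

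The second and more serious gap is Step 3. Varying the construction by Dehn filling along an auxiliary circle amounts to inserting powers of a full twist on the encircled strands, and the full twist does not lie in $\Gamma^2 P_m$ (it has nonzero image in $P_m/\Gamma^2 P_m\cong\Z^{\binom{m}{2}}$), let alone in $\Gamma^n P_m$ for large $n$. So the variations that Thurston's hyperbolic Dehn surgery theorem controls are exactly the ones that destroy the $n$-equivalence you need, and conversely inserting $\gamma^i$ for a deep commutator $\gamma$ is not a Dehn filling of a fixed cusped manifold. Some genuinely new input is needed to reconcile ``deep in the lower central series'' with ``hyperbolic and pairwise distinct.'' The paper's solution is the fractional Dehn twist coefficient: since $\Gamma^n P_m$ is a non-trivial non-central normal subgroup of $B_m$, it contains a pseudo-Anosov element $\gamma$ with $c(\gamma)>1$ \cite[Theorem 3]{it3}; then $c(\beta\gamma^i)\to\infty$ forces the genera of the closures to diverge (distinctness) and, combined with pseudo-Anosovness of $\beta\gamma^i$ for large $i$, forces hyperbolicity. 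Without this (or a comparably careful substitute), your family $\{K_i\}$ is not shown to contain infinitely many distinct knots, nor any hyperbolic ones.
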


In \cite{ta2} Takioka gave infinitely many non-trivial knots whose $(2,1)$-cable has the trivial $0$-th coefficient polynomial. The theorem gives the following far-reaching generalization.

\begin{corollary}
For every $p>0$, there exists infinitely many distinct hyperbolic knots whose $(q,1)$-cables have the trivial $2j$-th coefficient polynomial for all $j=0,1,\ldots, N$ and $q=1,\ldots,p$.
\end{corollary}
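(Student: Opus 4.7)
The plan is to apply Theorem~\ref{theorem:main} directly to the unknot $U$, with parameter $w=p$ and the prescribed $N$. Two elementary observations reduce the corollary to a specialization of the main theorem.

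First I would check that the $(q,1)$-cable pattern is braided with winding number $q$. By definition the pattern is the $(q,1)$-torus knot sitting inside the standard solid torus $V$, and up to isotopy in $V$ it coincides with the closure of $\sigma_1\sigma_2\cdots\sigma_{q-1}$ on $q$ strands; all strands pierce every meridional disk positively, so the pattern is braided with winding number exactly $q$. Hence for $q\leq p$ it belongs to the class of patterns to which Theorem~\ref{theorem:main} applies. Next I would note that the $(q,1)$-cable of $U$ is the $(q,1)$-torus knot $T(q,1)$, which is itself unknotted, so $P_{U_P}(v,z)=1$ whenever $P$ is a $(q,1)$-cable pattern, and therefore every $2j$-th coefficient polynomial of $U_P$ is trivial ($1$ for $j=0$ and $0$ for $j\geq 1$).

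Combining these, Theorem~\ref{theorem:main} applied to $K=U$ yields infinitely many mutually distinct hyperbolic knots $\{K_i\}$ whose $P$-satellites have the same first $N+1$ coefficient polynomials as $U_P$ for every braided pattern $P$ with $w(P)\leq p$. Specializing $P$ to the $(q,1)$-cable pattern for $q=1,\ldots,p$ produces exactly the conclusion. There is essentially no obstacle in the corollary itself; all of the difficulty is concentrated in Theorem~\ref{theorem:main}, and the deduction only requires the identification of the $(q,1)$-cable pattern as braided with winding number $q$ and the triviality of its satellite on the unknot.
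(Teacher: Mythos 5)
Your proposal is correct and is exactly the deduction the paper intends (the paper leaves it implicit, stating only that the corollary follows from Theorem~\ref{theorem:main}): specialize to $K$ the unknot, identify the $(q,1)$-cable pattern as a braided pattern of winding number $q\leq p$, and use that the $(q,1)$-cable of the unknot is unknotted so its coefficient polynomials are trivial. No issues.
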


\section*{Acknowledgement}
The author is partially supported by JSPS KAKENHI Grant Numbers 19K03490, 21H04428, 	23K03110. 

\section{Finite type invariants determine the coefficient polynomials}

First we prove elementary algebraic observation that leads to a useful prospect of knot polynomial; \emph{sufficiently many finite type invariants determines the knot polynomials}. This is a bit surprising since a knot polynomial produces infinitely many distinct finite type invariants. This point of view, in a slightly different form, has been appeared in several places such as, \cite{dl,kss}. 

For a Laurent polynomial $F(v) \in \Z[v^{\pm 2}]$ and $i=0,1,\ldots,$ let $f_i \in \Q$ be the $i!$ times of the coefficient of $x^{i}$ of the formal power series obtained by putting $v=e^{x} = \sum_{n=0}^{\infty}\frac{x^{n}}{n!}$.
Namely, we define $f_i(K)$ by
\[ F(e^x)=\sum_{i=0}^{\infty} f_i\frac{x^i}{i!}\]
For $G(v) \in \Z[v^{\pm 2}]$, we define $g_i \in \Q$ similarly.

We define the \emph{breadth} of a Laurent polynomial $F(v) \in \Z[v^{\pm 2}]$ by 
\[ \breadth F = \begin{cases} 
\max \deg_v F - \min \deg_v F  & (F\neq 0) \\
-2 & (F=0)
\end{cases}
\]

\begin{proposition}
\label{prop:algebra}
For $F(v),G(v) \in \Z[v^{\pm 2}]$, if $f_i=g_i$ holds for all $0,\ldots,\frac{1}{2}\breadth F + \frac{1}{2}\breadth G + 1$, then $F(v)=G(v)$.
\end{proposition}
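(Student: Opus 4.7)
The plan is to consider the difference $H(v) := F(v) - G(v) = \sum_k c_k v^{2k}$ (a finite sum) and show $H \equiv 0$. Substituting $v = e^x$ gives
\[
H(e^x) = \sum_k c_k\, e^{2kx} = \sum_{i=0}^{\infty} \Bigl(\sum_k c_k (2k)^i\Bigr) \frac{x^i}{i!},
\]
so the hypothesis $f_i = g_i$ for $i = 0, 1, \ldots, n$ (with $n := \tfrac{1}{2}\breadth F + \tfrac{1}{2}\breadth G + 1$) translates directly into the linear equations $\sum_k c_k (2k)^i = 0$ on the coefficients of $H$, indexed by $i = 0,1,\ldots,n$.

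The crucial step is a sharp bound on the number of monomials of $H$. Since a nonzero element of $\Z[v^{\pm 2}]$ of breadth $b$ has at most $\tfrac{b}{2}+1$ nonzero terms (the convention $\breadth 0 = -2$ handles the zero polynomial), we obtain
\[
|\operatorname{supp}(H)| \;\le\; |\operatorname{supp}(F)| + |\operatorname{supp}(G)| \;\le\; \bigl(\tfrac{1}{2}\breadth F + 1\bigr) + \bigl(\tfrac{1}{2}\breadth G + 1\bigr) \;=\; n+1.
\]
Let $2k_1 < 2k_2 < \cdots < 2k_m$ be the distinct exponents actually occurring in $H$, with $m \le n+1$, and let $c_{k_1},\ldots,c_{k_m}$ denote the corresponding nonzero coefficients. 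The first $m$ of the available equations then read $\sum_{j=1}^m c_{k_j}(2k_j)^i = 0$ for $i = 0,\ldots,m-1$, which is a Vandermonde system in the distinct nodes $2k_1,\ldots,2k_m$. The Vandermonde matrix is invertible, so every $c_{k_j}$ vanishes, giving $H \equiv 0$ and hence $F = G$.

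The only real insight is the numerical coincidence that $n+1$ equals exactly the maximum possible number of monomials of $H$, matching precisely the number of Vandermonde equations needed to pin them all down. Once this matching is noticed, the argument reduces to a short linear-algebra exercise, and I do not foresee any significant obstacle; in particular, the slightly strange-looking convention $\breadth 0 = -2$ is precisely calibrated so that the inequality above remains valid when one (or both) of $F,G$ is zero.
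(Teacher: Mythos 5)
Your proof is correct and takes essentially the same approach as the paper: both arguments reduce the hypothesis to a Vandermonde system in the distinct even exponents and conclude by its invertibility, with the size of the system controlled by the same breadth count. The only (cosmetic) difference is that you work with the difference $H = F - G$ and bound its support directly, which neatly sidesteps the paper's case split on whether the supports of $F$ and $G$ overlap.
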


\begin{proof}
If $F(v)=G(v)=0$, we have nothing to prove, so we assume that $F(v)\neq 0$.

Let $m= \min \deg_v  F(v)$ and $m' = \min \deg_v G(v)$. Here if $F(v)=0$ we take $m'=m$. With no loss of generality, by interchanging the role of $F$ and $G$ if necessary, we can assume that $m\leq m'$.

Let $d=\frac{1}{2}\breadth F +  \frac{1}{2}\breadth G +2$. 
For an increasing sequence of integers $a_1<a_2<\cdots < a_d$, let $A(a_1,\ldots,a_d)$ be $d\times d$ Vandermonde matrix
\[ A(a_1,\ldots,a_d)= \begin{pmatrix}
1 & 1 & \cdots & 1 \\
a_1& a_2 & \cdots & a_d\\
a_1^2 & a_2^2 & \cdots& a_d^2 \\
\vdots & \vdots& \ddots & \vdots\\
a_1^{d-1} & a_2^{d-1}& \cdots & a_d^{n-1}
 \end{pmatrix}\]
We put $c_F(a_1,\ldots,a_d), f \in \Z^{d}$ by 
\[
c_F=c_F(a_1,\ldots,a_d)= \begin{pmatrix} c_{a_1}(F) \\ c_{a_2}(F) \\ \vdots \\ c_{a_d}(K) \end{pmatrix}, f = \begin{pmatrix} f_0(K) \\ f_1(K) \\ \vdots \\ f_{d-1}(K) \end{pmatrix}
\]
where $c_{a_i}(F)$ is the coefficient of $v^{a_i}$ of $F$. We define 
$c_{G}(a_1,\ldots,a_d), g \in \Z^{d}$ similarly.

Since $c_{a_i}(F) =0$ if $a_i<m$ or $m+\breadth F<i$, $c_F(a_1,\ldots,a_d)$ determines $F(v)$ if $\{a_1,\ldots,a_{d}\} \supset \{m,m+2,\ldots, m+\breadth(F)\}$. Similarly, $c_G(a_1,\ldots,a_d)$ determines $G(v)$ if $\{a_1,\ldots,a_{d}\} \supset \{m',m'+2,\ldots, m'+\breadth(G)\}$. 

Let $A=A(a_1,\ldots,a_d)$ where
\[ (a_1,\ldots,a_d) = (m,m+2,\ldots,m+2(d-1))\]
if $ m' \leq m +\breadth F$ and
\[ (a_1,\ldots,a_d) = (m,m+2,\ldots,m+\breadth F, m',m'+2,\ldots, m'+\breadth(G)) \]
If $m' > m + \breadth F$.
By the definition of $f_i$ and $g_i$, it follows that $Ac_F=f$ and $Ac_G=g$. Since $f=g$ by assumption, $A(c_F-c_G)=0$. The Vandermonde matrix $A$ is invertible so $c_F=c_G$. Therefore $F(v)=G(v)$.
\end{proof}

We apply the Proposition \ref{prop:algebra} for the coefficient polynomial $P^{2j}_K(v)$. We put
\[ P^{2j}_K(e^x)=\sum_{i=0}^{\infty} h^{2j}_i(K)\frac{x^i}{i!}\]
By the Morton-Franks-Williams inequality \cite{mo,fw}, we have
\[ \breadth P^{2j}_K(v) \leq \breadth_v P_K(v,z) \leq 2(b(K)-1) \] 
where $b(K)$ be the braid index of $K$.
Therefore  Proposition \ref{prop:algebra} proves the following.

\begin{theorem}
\label{theorem:HtoP}
If $h^{2j}_i(K)=h^{2j}_i(K')$ for all $i=0,\ldots,b(K)+b(K')-1$, then $P^{2j}_K(v)=P^{2j}_{K'}(v)$.
\end{theorem}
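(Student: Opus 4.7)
The plan is to apply Proposition~\ref{prop:algebra} directly to the pair $F(v) = P^{2j}_K(v)$ and $G(v) = P^{2j}_{K'}(v)$. Under this identification the formal power-series coefficients $f_i$ and $g_i$ appearing in the proposition are, by the very definition given just above the theorem, precisely $h^{2j}_i(K)$ and $h^{2j}_i(K')$.

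The only thing left to verify is that the hypothesis supplied by the theorem covers the range of indices demanded by the proposition. Proposition~\ref{prop:algebra} requires agreement of $f_i$ and $g_i$ throughout $i = 0, 1, \ldots, \tfrac{1}{2}\breadth F + \tfrac{1}{2}\breadth G + 1$. The Morton--Franks--Williams inequality, recorded just before the theorem statement, gives
\[ \breadth P^{2j}_K(v) \leq 2(b(K)-1), \qquad \breadth P^{2j}_{K'}(v) \leq 2(b(K')-1),\]
so that
\[ \tfrac{1}{2}\breadth F + \tfrac{1}{2}\breadth G + 1 \leq b(K)+b(K') - 1.\]
Therefore the assumed equality $h^{2j}_i(K) = h^{2j}_i(K')$ for $i = 0, \ldots, b(K)+b(K')-1$ already provides the input needed for Proposition~\ref{prop:algebra}, and the conclusion $P^{2j}_K(v) = P^{2j}_{K'}(v)$ follows immediately.

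There is essentially no obstacle to overcome here: all of the work is encapsulated in Proposition~\ref{prop:algebra} together with the Morton--Franks--Williams breadth bound. The one minor sanity check is a parity point, namely that the coefficient polynomial lies in $\Z[v^{\pm 2}]$, hence its breadth is even and $\tfrac{1}{2}\breadth$ is a non-negative integer, so the displayed inequality genuinely places the proposition's required range of indices inside the range provided by the hypothesis.
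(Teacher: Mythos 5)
Your proposal is correct and is exactly the argument the paper intends: the paper derives Theorem~\ref{theorem:HtoP} by combining Proposition~\ref{prop:algebra} with the Morton--Franks--Williams bound $\breadth P^{2j}_K(v) \leq 2(b(K)-1)$, precisely as you do. Your arithmetic check that $\tfrac{1}{2}\breadth F + \tfrac{1}{2}\breadth G + 1 \leq b(K)+b(K')-1$ is the whole content of the deduction, and your parity remark is a reasonable sanity check.
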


Two knots $K$ and $K'$ are said to be \emph{$n$-equivalent} if $v(K)=v(K')$ holds for all finite type invariants $v$ of degree $\leq n$. 
It is known that $h^{2j}_i$ is a finite type invariant of order $2j+i$, so it is convenient to rephrase Theorem \ref{theorem:main} in a following weaker form. 

\begin{corollary}
\label{cor:n-equivalence}
If $K$ and $K'$ are $n$-equivalent for $n\geq 2j +b(K)+b(K')-1$. Then $P^{2j}_K(v)=P^{2j}_{K'}(v)$.
\end{corollary}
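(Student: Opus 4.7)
The plan is essentially a direct chaining of Theorem~\ref{theorem:HtoP} with the remark, stated in the paragraph just before the corollary, that $h^{2j}_i$ is a Vassiliev invariant of order $2j+i$. I would organize the proof in three short steps.

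First, I would recall (and cite a standard reference for) the order bound on $h^{2j}_i$: the substitution $v=e^x$ in the HOMFLY polynomial is the classical Birman--Lin construction, and it is well known that the coefficient of $x^i z^{2j}$ in $P_K(e^x,z)$ (up to the factor $i!$) is a finite type invariant of degree $2j+i$. This is the only non-formal input, and I would not re-prove it.

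Second, assume $K$ and $K'$ are $n$-equivalent with $n \geq 2j + b(K) + b(K') - 1$. For each integer $i$ with $0 \leq i \leq b(K) + b(K') - 1$, the invariant $h^{2j}_i$ has order $2j+i \leq 2j + b(K) + b(K') - 1 \leq n$, so by the definition of $n$-equivalence we get $h^{2j}_i(K) = h^{2j}_i(K')$. Theorem~\ref{theorem:HtoP} applied to $P^{2j}_K$ and $P^{2j}_{K'}$ then immediately yields $P^{2j}_K(v) = P^{2j}_{K'}(v)$.

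There is essentially no obstacle: the only genuinely non-formal ingredient is the order bound on $h^{2j}_i$, which is a standard fact about the HOMFLY polynomial; once it is granted, the rest is a trivial inequality check followed by the invocation of Theorem~\ref{theorem:HtoP}.
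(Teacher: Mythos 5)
Your proposal is correct and is exactly the argument the paper intends: the corollary is presented as an immediate consequence of Theorem~\ref{theorem:HtoP} together with the stated fact that $h^{2j}_i$ has order $2j+i$, and your inequality check $2j+i \leq 2j+b(K)+b(K')-1 \leq n$ for $i=0,\ldots,b(K)+b(K')-1$ is the whole content. Nothing is missing.
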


As an immediate application, we get the following.

\begin{corollary}\cite[Theorem 3.1]{dl}
\label{cor:DL}
If $K$ is $n$-trivial (i.e. $n$-equivalent to the unknot) for $n\geq b(K)$, then the maximum self-linking number $\overline{sl}(K)$ is negative.
\end{corollary}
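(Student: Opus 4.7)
The plan is to combine Corollary \ref{cor:n-equivalence} with the Morton--Franks--Williams Bennequin-type inequality for transverse knots,
\[ \overline{sl}(K) \leq \min \deg_v P_K(v,z) - 1. \]
The strategy is to use the $n$-triviality hypothesis, with $n$ at least the braid index, to pin down the $0$-th coefficient polynomial of $K$ to be that of the unknot, and then to observe that this forces the right-hand side above to be nonpositive.

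First, I would apply Corollary \ref{cor:n-equivalence} with $K'$ the unknot and $j=0$. Since the unknot has braid index $1$, the required bound $n \geq 2j + b(K) + b(K') - 1$ reduces to $n \geq b(K)$, which is precisely the hypothesis. The corollary then yields $P^0_K(v) = P^0_{\mathrm{Unknot}}(v) = 1$.

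Second, from the expansion $P_K(v,z) = \sum_{j \geq 0} P^{2j}_K(v) z^{2j}$ the coefficient of $z^0$ equals the nonzero constant $1$, whose $v$-degree is $0$. Consequently
\[ \min \deg_v P_K(v,z) \leq \min \deg_v P^0_K(v) = 0, \]
and substituting into the transverse MFW inequality gives $\overline{sl}(K) \leq -1 < 0$, as desired.

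There is no real obstacle beyond what has already been established: the only ingredient drawn from outside the preceding machinery is the transverse Bennequin-type bound from HOMFLY (Franks--Williams, Morton), which I would simply cite. All the actual work is packaged in Corollary \ref{cor:n-equivalence}; the present deduction is a one-line degree comparison together with a standard inequality.
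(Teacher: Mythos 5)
Your proposal is correct and is essentially identical to the paper's proof: both apply Corollary \ref{cor:n-equivalence} with $K'$ the unknot and $j=0$ to get $P^0_K(v)=1$, then conclude via the Morton--Franks--Williams bound $\overline{sl}(K)+1\leq \min\deg_v P_K(v,z)\leq \min\deg_v P^0_K(v)=0$. Your explicit check that $b(\mathrm{Unknot})=1$ makes the hypothesis reduce to $n\geq b(K)$ is a welcome detail the paper leaves implicit.
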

\begin{proof}
By Corollary \ref{cor:n-equivalence} $P^{0}_K(v)=P^{0}_{\sf unknot}(v)=1$. Hence by Morton-Franks-Williams inequality $\overline{sl}(K) +1 \leq \min \deg_{v} P_K(v,z) \leq \min \deg_v P^{0}_K(v) = 0$.
\end{proof}

Corollary \ref{cor:DL} makes a sharp contrast with \cite{ba} that says that for every knot $K$ and $n>0$, there exists a quasipositive knot which is $n$-equivalent to $K$. These results say that finite type invariants and the braid index give a restriction of quasipositivity, even though only one of them does not.

\section{Construction of knots sharing the same satellite coefficient polynomials}

By Corollary \ref{cor:n-equivalence}, a construction of knots having the same coefficient polynomial boils down to a construction of $n$-equivalent knots with  prescribed braid index bound.

\begin{proposition}
\label{prop:construction}
For a knot $K$, let $\beta \in B_m$ $(m\geq 3)$ be a braid whose closure is $K$
Let $\gamma \in \Gamma^n P_m$ and $K'$ be the closure of $\beta\gamma$, where $\Gamma^n P_m$ is the $n$-th lower central series of the pure braid group $P_m$. If $P$ is a braided pattern that satisfies $n \geq 2mw(P)+2N$, then the $P$-satellites $K_P$ and $K'_P$ have the same $2j$-th coefficient polynomial for $j=0,1,\ldots,N$.

\end{proposition}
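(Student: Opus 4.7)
The plan is to realize $K_P$ and $K'_P$ as closures of braids in $B_{mw(P)}$ that differ by right-multiplication by an element of $\Gamma^n P_{mw(P)}$, invoke a theorem of Stanford to deduce $(n-1)$-equivalence of the two satellites, and then apply Corollary \ref{cor:n-equivalence}.

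First I would use the $w$-cabling homomorphism $c_w \colon B_m \to B_{mw}$ (with $w = w(P)$) that replaces each strand by $w$ parallel copies. This is a group homomorphism sending pure braids to pure braids, so by the general fact that any group homomorphism preserves lower central series, $c_w(\Gamma^n P_m) \subseteq \Gamma^n P_{mw}$. For a braided pattern $P$ of winding number $w$, the $P$-satellite of the closure of any $\eta \in B_m$ can be written as the closure of $c_w(\eta) \cdot \delta_P$ for a fixed braid $\delta_P \in B_{mw}$ that depends only on $P$ and a framing convention (roughly, $\delta_P$ is the braid representing the pattern seen on $w$ of the $mw$ strands, plus a full-twist framing correction on each cabled block). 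Thus $K_P = \widehat{c_w(\beta)\,\delta_P}$ and $K'_P = \widehat{c_w(\beta\gamma)\,\delta_P}$.

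Rewriting $c_w(\beta\gamma)\delta_P = c_w(\beta)\delta_P \cdot (\delta_P^{-1} c_w(\gamma)\delta_P)$, the two braids differ by right-multiplication by $\mu := \delta_P^{-1} c_w(\gamma)\delta_P$. Since $P_{mw}$ is normal in $B_{mw}$ and $\Gamma^n P_{mw}$ is characteristic in $P_{mw}$, conjugation by $\delta_P$ preserves $\Gamma^n P_{mw}$, so $\mu \in \Gamma^n P_{mw}$. I would then invoke Stanford's theorem (\emph{Braid commutators and Vassiliev invariants}): if two braids in $B_k$ have quotient lying in $\Gamma^n P_k$, their closures are not distinguished by any Vassiliev invariant of order strictly less than $n$. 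Applied in $B_{mw}$, this shows $K_P$ and $K'_P$ are $(n-1)$-equivalent.

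Finally, because $K_P$ and $K'_P$ are closures of braids on $mw$ strands, $b(K_P),\, b(K'_P) \leq mw$. Corollary \ref{cor:n-equivalence} then forces $P^{2j}_{K_P}(v) = P^{2j}_{K'_P}(v)$ as soon as $n-1 \geq 2j + b(K_P) + b(K'_P) - 1$, and the hypothesis $n \geq 2mw + 2N$ secures this for every $j = 0,1,\ldots,N$. The main obstacle I anticipate is setting up the cabling homomorphism and the insertion braid $\delta_P$ precisely enough to realize both satellites as braid closures of exactly the claimed form, which requires keeping careful track of the blackboard-versus-Seifert framing discrepancy; after that, the argument is essentially a bookkeeping check that the index in Stanford's theorem matches the bound coming from Corollary \ref{cor:n-equivalence}.
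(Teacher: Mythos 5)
Your proof is correct, but it takes a genuinely different route from the paper's at the key step. The paper first notes that the companions $K$ and $K'$ are $n$-equivalent (Habiro, Stanford), and then invokes Kuperberg's theorem that finite type invariants of order $\leq n$ of a satellite are determined by the pattern together with the finite type invariants of order $\leq n$ of the companion; this transfers the equivalence from $K,K'$ to $K_P,K'_P$ without ever representing the satellites as braid closures. You instead push everything into $B_{mw}$ via the cabling homomorphism, observe that $c_w(\Gamma^n P_m)\subseteq \Gamma^n P_{mw}$ by functoriality of the lower central series and that $\Gamma^n P_{mw}$ is preserved under conjugation (being characteristic in the normal subgroup $P_{mw}$), and then apply Stanford's theorem a single time, directly to the satellite braids. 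Both arguments conclude identically with the bound $b(K_P),b(K'_P)\leq mw(P)$ and Corollary \ref{cor:n-equivalence}. Your route is more self-contained in that it needs only Stanford's braid-theoretic statement, at the cost of the bookkeeping with $\delta_P$; the paper's route is shorter and its equivalence step works for arbitrary (not necessarily braided) patterns, braidedness being used only for the braid index bound. One point you should nail down: the framing correction inside $\delta_P$ consists of $-e(\eta)$ full twists on a $w$-strand block, where $e(\eta)$ is the exponent sum of the companion braid, so $\delta_P$ does not depend only on $P$ as stated. This is harmless here because $\gamma\in\Gamma^n P_m$ with $n\geq 2$ lies in the commutator subgroup, hence $e(\beta)=e(\beta\gamma)$ and the same $\delta_P$ serves both braids --- but it needs to be said explicitly. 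Your tracking of the off-by-one (deducing only $(n-1)$-equivalence from membership in $\Gamma^n$ and checking that the hypothesis $n\geq 2mw(P)+2N$ still absorbs it) is, if anything, more careful than the paper's.
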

\begin{proof}
By the construction, $K$ and $K'$ are $n$-equivalent \cite{ha,st}.
It is known that finite type invariants of order $\leq n$ of a (not necessarily braided) satellite knot $K_P$ is determined by the pattern $P$ and the finite type variants of order $\leq n$ of $K$ \cite{ku}. Thus $K_P$ and $K'_P$ are $n$-equivalent whenever $K$ and $K'$ are $n$-equivalent. Since $P$ is a braided pattern, $b(K_P),b(K'_P) \leq mw(P)$. Therefore $n \geq 2mw(P)+2N \geq b(K_P)+b(K'_P)+2N$ so by Corollary \ref{cor:n-equivalence} $P^{2j}_{K_P}(v)=P^{2j}_{K'_P}(v)$ holds for all $j=0,1,\ldots, N$. 
\end{proof}

It remains to show that the construction in Proposition \ref{prop:construction} can produce infinitely many mutually distinct hyperbolic knots.
We prove this by using the \emph{fractional Dehn twist coefficient} (\emph{FDTC}, in short).

The FDTC is a map $c:B_n \rightarrow \Q$ of the braid group $B_m$.
We refer to \cite{ik,hkm,ma} for definition and its basics.
We will only use the following properties of the FDTC.
\begin{itemize}
\item[(a)] $|c(\alpha \beta)-c(\alpha)-c(\beta)| \leq 1$ for all $\alpha,\beta \in B_m$ and $c(\alpha^{n})=nc(\alpha)$ for all $n \in \Z$.
\item[(b)] Let $K$ be a knot which is a closure of $\beta \in B_m$. Then $|c(\beta)|< g(K)+1$ holds \cite{it2,ik}. 
\item[(c)] Let $K$ be a knot which is a closure of $\beta \in B_m$. If $|c(\beta)|>1$ and $\beta$ is pseudo-Anosov, then $K$ is hyperbolic \cite{it1,ik}.
\item[(d)] Let $H$ be a non-trivial, non-central normal subgroup of $B_m$. Then for any $C>0$, there exists a pseudo-Anosov $\beta \in H$ such that $c(\beta)>C$ \cite[Theorem 3]{it3}.
\end{itemize}

\begin{proof}[Proof of Theorem \ref{theorem:main}]
Since $\Gamma^nP_{m}$ is a non-trivial, non-central normal subgroup of $B_m$, by the property (d) of FDTC we can take $\gamma \in \Gamma^{n}P_m$ so that $\gamma$ is pseudo-Anosov with $c(\gamma)> 1$. By the property (a) of FDTC, $\lim_{i \to \infty} c(\beta\gamma^i)=\infty$. Let $K_i$ be the closure of the braid $\beta\gamma^i$. By the property (b) of FDTC, if follows that $\lim_{i\to \infty}g(K_i) = \infty$ so the set $\{K_i\}$ contains infinitely many distinct knots.
Furthermore, since $\gamma$ is pseudo-Anosov, $\beta\gamma^{i}$ is pseudo-Anosov if $i$ is sufficiently large \cite{pa}. Therefore by the property (c) of FDTC, $K_i$ is hyperbolic whenever $i$ is sufficiently large.
\end{proof}

\begin{remark}
The construction in Proposition \ref{prop:construction} can be adapted so that the knots $K$ and $K'$ have the same Alexander polynomial and the Levine-Tristram signatures, as in \cite{ka,mi}.
We take $\gamma$ so that $\gamma \in \Gamma^{n} P_m \cap \mathrm{Ker}\, \mathcal{B}_m$ for $m \geq 5$, where $\mathcal{B}_m$ is the reduced Burau representation. Since the reduced Burau representation is non-faithful for $m\geq 5$ \cite{bi}, the intersection is a non-trivial, non-central normal subgroup \cite[Lemma 2.1]{lo}.
Both the Alexander polynomial and the Levine-Tristram signatures of closed braids are determined by the image of the Burau representation \cite{gg}, this shows that knots given in Theorem \ref{theorem:main} can be adjusted so that they share the same Alexander polynomial and the Levine-Tristram signatures.
\end{remark}

\section{Application for Kauffman polynomial}

We close the paper by giving applications of Proposition \ref{prop:algebra} to Kauffman polynomial. 

Let $D_K(v,z)=a^{-w(D)}\Lambda_{D}(v,z)$ be the Dubrovnik version of the Kauffman polynomial\footnote{Although we use the Dubrovnik polynomials, since $D_K(a,z) = F_K(-\sqrt{-1} v^{-1}, \sqrt{-1}z)$, all the results below hold for the Kauffman polynomial $F_K(v,z)$.}. Here $w(D)$ is the writhe and $\Lambda_D(v,z)$ is the regular
isotopy invariant defined by the skein relations
%\begin{align*}
%&\Lambda_{ \raisebox{-1mm}{\includegraphics*[width=4mm]{skeinc.eps}}}(v,z)
%= 1, 
%\qquad
% v^{-1} \Lambda_{ \raisebox{-1mm}{\includegraphics*[width=4mm]{skeincp.eps}}}(v,z)
%=\Lambda_{ \raisebox{-1mm}{\includegraphics*[width=4mm]{skeinst.eps}}}(v,z)
%= v\Lambda_{ \raisebox{-1mm}{\includegraphics*[width=4mm]{skeincn.eps}}}(v,z)\\
%&\Lambda_{ \raisebox{-1mm}{\includegraphics*[width=4mm]{skeinp.eps}}}(v,z)
%- \Lambda_{ \raisebox{-1mm}{\includegraphics*[width=4mm]{skeinn.eps}}}(v,z)
%= z\left(\Lambda_{ \raisebox{-1mm}{\includegraphics*[width=4mm]{skein8.eps}}}
%- \Lambda_{ \raisebox{-1mm}{\includegraphics*[width=4mm]{skein0.eps}}}\right),\\
%\end{align*}
\begin{align*}
&\Lambda_{\sf Unknot}(v,z) = 1, 
\qquad
v^{-1} \Lambda_{C_+}(v,z)=\Lambda_C(v,z)= v\Lambda_{C_-}(v,z)\\
&\Lambda_{K_+}(v,z) - \Lambda_{K_-}(v,z)
= z\left(\Lambda_{K_0}(v,z) - \Lambda_{K_{\infty}}(v,z)\right),
\end{align*}
where $C_{\pm}$ is the diagram obtained by adding positive or negative kink to the diagram $C$.
We put $D_K(v,z)=\sum_{j=0}^{d}D^j_{K}(v)z^j$ and we call $D^j_K(v) \in \Z[v^{\pm 1}]$ the \emph{$j$-th Kauffman coefficient polynomial} of $K$. It is known that $D^{j}_K(v) \in v^j\Z[v^{\pm 2}]$.
We put
\[ D^j_K(e^{x})=\sum_{i=0}^{\infty}k^{j}_i(K)\frac{x^{i}}{i!}\]
Then $k^{j}_i$ is a finite type invariant of order $i+j$.

For the Dubrovnik polynomial and the arc index $\alpha(K)$ we have the Morton-Beltrami inequality \cite{mb} 
\[ \textrm{breadth}_v D_K(v,z) \leq \alpha(K)-2. \]
Therefore by Proposition \ref{prop:algebra} we get the followings.

\begin{theorem}
\label{theorem:main-Kauffman}
If $k^{j}_i(K)=k^{j}_i(K')$ for all $i=0,1,\ldots,\frac{\alpha(K)}{2} +\frac{\alpha(K')}{2}-1$, then $D^{j}_K(v)=D^{j}_{K'}(v)$.
\end{theorem}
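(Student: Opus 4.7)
The plan is to mimic the HOMFLY argument used for Theorem \ref{theorem:HtoP}, applying Proposition \ref{prop:algebra} directly. The one small adjustment is that $D^j_K(v)$ lies in $v^j\Z[v^{\pm 2}]$ rather than in $\Z[v^{\pm 2}]$, so I would first normalize and work with $E^j_K(v) := v^{-j}D^j_K(v) \in \Z[v^{\pm 2}]$, and likewise for $K'$.

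Let $e^j_i(K)$ denote the expansion coefficients of $E^j_K(e^x)$ as in the definition preceding Proposition \ref{prop:algebra}. From $D^j_K(e^x) = e^{jx}E^j_K(e^x)$, expanding $e^{jx}$ and collecting powers of $x$ yields a triangular linear relation expressing $k^j_i(K)$ as a $\Q$-linear combination of $e^j_0(K),\ldots,e^j_i(K)$ whose coefficients depend only on $i$ and $j$, not on the knot. Inverting this triangular system, the hypothesis $k^j_i(K)=k^j_i(K')$ for $i\leq M$ yields $e^j_i(K)=e^j_i(K')$ for $i\leq M$. Since $D^j_K$ is the $z^j$-coefficient of $D_K(v,z)$, the Morton--Beltrami inequality gives $\breadth E^j_K(v)=\breadth D^j_K(v)\leq \alpha(K)-2$, and analogously for $K'$. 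Taking $M=\frac{\alpha(K)}{2}+\frac{\alpha(K')}{2}-1$, we have $M\geq \frac{1}{2}\breadth E^j_K+\frac{1}{2}\breadth E^j_{K'}+1$, so Proposition \ref{prop:algebra} applied to $E^j_K$ and $E^j_{K'}$ gives $E^j_K=E^j_{K'}$, whence $D^j_K=D^j_{K'}$.

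I do not expect any real obstacle: the only subtle point is the $v^j$-twist, handled by the preliminary normalization and a routine triangular inversion, and the rest is a direct transcription of the argument used to derive Theorem \ref{theorem:HtoP} from Proposition \ref{prop:algebra}.
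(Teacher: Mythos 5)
Your proof is correct and follows essentially the same route as the paper, which obtains the theorem by applying Proposition \ref{prop:algebra} together with the Morton--Beltrami bound $\breadth_v D_K(v,z)\leq \alpha(K)-2$. Your normalization $E^j_K=v^{-j}D^j_K$ and the triangular relation between $k^j_i$ and $e^j_i$ in fact make explicit a detail the paper glosses over (for odd $j$ the polynomial $D^j_K$ does not literally lie in $\Z[v^{\pm2}]$ as Proposition \ref{prop:algebra} is stated), so this is a careful transcription of the intended argument rather than a different one.
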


\begin{corollary}
If $K$ and $K'$ are $n$-equivalent for $n \geq j+ \frac{\alpha(K)}{2} +\frac{\alpha(K')}{2}-1$, then $D^j_K(v,z)=D^j_{K'}(v,z)$.
\end{corollary}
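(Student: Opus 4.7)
The plan is to mimic, step for step, the derivation of Corollary~\ref{cor:n-equivalence} from Theorem~\ref{theorem:HtoP}, substituting the Kauffman analogues $k^{j}_i$ for the HOMFLY-side invariants $h^{2j}_i$ and invoking Theorem~\ref{theorem:main-Kauffman} (which already packages Proposition~\ref{prop:algebra} together with the Morton--Beltrami arc-index bound) in place of Theorem~\ref{theorem:HtoP}. No new geometric input is required; this corollary is a bookkeeping consequence of the material already assembled.

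Concretely, the author has just recorded that $k^{j}_i$ is a finite type invariant of order $i+j$. Since $n$-equivalence means that every finite type invariant of order $\le n$ agrees on the two knots, it follows immediately that $k^{j}_i(K)=k^{j}_i(K')$ whenever $i+j\le n$. The hypothesis $n\ge j+\tfrac{\alpha(K)}{2}+\tfrac{\alpha(K')}{2}-1$ ensures this inequality on the entire range $i\in\{0,1,\dots,\tfrac{\alpha(K)}{2}+\tfrac{\alpha(K')}{2}-1\}$ needed to feed into Theorem~\ref{theorem:main-Kauffman}, and that theorem then delivers the desired equality of $j$-th Kauffman coefficient polynomials.

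There is essentially no main obstacle: the analytic content has all been extracted in Proposition~\ref{prop:algebra}, the Morton--Beltrami breadth bound, and the observation that $k^{j}_i$ has finite type order $i+j$. The only thing worth flagging is a minor inconsistency in the statement: the $j$-th Kauffman coefficient polynomial $D^{j}_K(v)$ was defined as an element of $v^{j}\Z[v^{\pm 2}]$ depending only on $v$, so the conclusion one actually produces is $D^{j}_K(v)=D^{j}_{K'}(v)$ rather than an equality of two-variable polynomials.
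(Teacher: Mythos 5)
Your proposal is correct and is exactly the derivation the paper intends (the paper states this corollary without proof as an immediate consequence of Theorem \ref{theorem:main-Kauffman} and the fact that $k^j_i$ has finite type order $i+j$): the hypothesis $n\geq j+\tfrac{\alpha(K)}{2}+\tfrac{\alpha(K')}{2}-1$ guarantees $k^j_i(K)=k^j_i(K')$ for all $i$ up to $\tfrac{\alpha(K)}{2}+\tfrac{\alpha(K')}{2}-1$, which is precisely what that theorem requires. Your observation that the conclusion should read $D^j_K(v)=D^j_{K'}(v)$ rather than $D^j_K(v,z)=D^j_{K'}(v,z)$ is also a correct catch of a typographical slip in the statement.
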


Let $c(K)$ be the minimum crossing number of $K$.
In \cite[Corollary 5.3]{dl} it is shown that if a knot $K$ is $c(K)$-trivial then $K$ has the trivial HOMFLY polynomial\footnote{We can also deduce this from Corollary \ref{cor:n-equivalence}.}.
The coefficient of $v^{i}z^j$ for the Dubrovnik polynomial $D_K(v,z)$ is non-trivial only if $|i|+j \leq c(K)$ \cite[Theorem 4]{th} hence
\[ \textrm{breadth}_v D^{j}_{K}(v) \leq 2(c(K)-j) \] 
holds. Therefore Proposition \ref{prop:algebra} gives the following.
\begin{corollary}
If $K$ is $c(K)$-trivial then $K$ has trivial Dubrovnik polynomial.
\end{corollary}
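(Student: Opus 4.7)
The goal is to show $D_K(v,z) = 1 = D_{\sf Unknot}(v,z)$, which amounts to showing $D^j_K(v) = D^j_{\sf Unknot}(v)$ for every $j \geq 0$. Since $D^0_{\sf Unknot}(v) = 1$ and $D^j_{\sf Unknot}(v) = 0$ for $j \geq 1$, the plan is to apply Proposition~\ref{prop:algebra} once for each $j$, in close parallel to the HOMFLY argument of Corollary~\ref{cor:DL}.

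Fix $j \geq 0$ and apply Proposition~\ref{prop:algebra} to $F(v) := D^j_K(v) - D^j_{\sf Unknot}(v)$ paired against the zero polynomial. (The fact that $D^j_K \in v^j\Z[v^{\pm 2}]$ rather than $\Z[v^{\pm 2}]$ is harmless: the Vandermonde argument in Proposition~\ref{prop:algebra} works verbatim on any single coset.) By Thistlethwaite's inequality $\breadth D^j_K \leq 2(c(K) - j)$, and since the breadth of $D^j_{\sf Unknot}$ is at most $0$, the subtraction gives $\breadth F \leq 2(c(K) - j)$. Proposition~\ref{prop:algebra} then reduces the desired conclusion $F = 0$ to the vanishing of the power-series coefficients $(F)_i = k^j_i(K) - k^j_i({\sf Unknot})$ for $i = 0, 1, \ldots, c(K) - j$. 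Each such difference is a finite-type invariant of order $i + j \leq c(K)$, so the assumption that $K$ is $c(K)$-trivial supplies these vanishings immediately, and hence $D^j_K = D^j_{\sf Unknot}$ for every $j$.

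The only subtle point, and the reason to apply Proposition~\ref{prop:algebra} to $F$ versus $0$ rather than to $D^j_K$ versus $D^j_{\sf Unknot}$ directly, is a mild off-by-one at $j = 0$: the direct setup would demand agreement of $k^0_i$ up to $i = c(K) + 1$, one more than $c(K)$-triviality provides, because $D^0_{\sf Unknot} = 1$ contributes breadth $0$ rather than $-2$. Absorbing the constant $1$ into $F$ kills this extra requirement without changing the breadth bound, so the argument closes cleanly.
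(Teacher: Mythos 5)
Your proof is correct and follows essentially the same route as the paper, which simply invokes Proposition~\ref{prop:algebra} together with Thistlethwaite's bound $\breadth_v D^j_K(v)\leq 2(c(K)-j)$ without writing out the details. Your observation that a direct comparison of $D^0_K$ with $D^0_{\sf Unknot}=1$ would demand one finite-type invariant of order $c(K)+1$, and your fix of applying the proposition to the difference against $0$, is a genuine (and correctly resolved) subtlety that the paper leaves implicit.
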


Unfortunately, these results are not effective to construct an example of infinitely many knots having the same (braided satellite) Kauffman polynomial up to given $z$-degrees. Unlike the braid index, it is impossible to construct infinitely many knots without increasing the arc index. This illustrates subtlety and difficulty of the problem to construct infinitely many knots having the same Kauffman polynomials.


\begin{thebibliography}{1}
\bibitem[Ba]{ba}
S.\ Baader, 
{\em Vassiliev invariants of quasipositive knots.}
Compos. Math.142(2006), no.5, 1343--1350.
\bibitem[Bi]{bi}
S.\ Bigelow, 
{\em The Burau representation is not faithful for $n=5$.}
Geom. Topol. 3 (1999), 397--404.

\bibitem[DL]{dl}
O.\ Dasbach and X.-S.\ Lin, 
{\em The Bennequin number of $n$-trivial closed $n$-braids is negative.}
Math. Res. Lett. 8 (2001), no.5-6, 629–635.

\bibitem[FW]{fw}
J.\ Franks and R.\ Williams, 
{\em Braids and the Jones polynomial.}
Trans. Amer. Math. Soc. 303 (1987), no.1, 97--108

\bibitem[GG]{gg}
J.\ Gambaudo and E.\ Ghys,
{\em Braids and signatures.}
Bull. Soc. Math. France 133 (2005), no.4, 541--579.

\bibitem[Ha]{ha}
K.\ Habiro,
{\em Claspers and finite type invariants of links}
Geom. Topol.4(2000), 1--83.

\bibitem[HMK]{hkm}
K.\ Honda, W.\ Kazez, and G.\ Mati\'c,
{\em Right-veering diffeomorphisms of compact surfaces with boundary.}
Invent. Math. 169 (2007), no.2, 427--449.


\bibitem[It1]{it1}
T.\ Ito,
{\em Braid ordering and the geometry of closed braid.}
Geom. Topol. 15 (2011), no.1, 473--498.

\bibitem[It2]{it2}
T.\ Ito,
{\em Braid ordering and knot genus.}
J. Knot Theory Ramifications 20 (2011), no.9, 1311--1323.

\bibitem[It3]{it3}
T.\ Ito, 
{\em A kernel of a braid group representation yields a knot with trivial knot polynomials.}
Math. Z. 280 (2015), no.1-2, 347-–353.

\bibitem[IK]{ik}
T.\ Ito and K.\ Kawamuro,
{\em Essential open book foliations and fractional Dehn twist coefficient.}
Geom. Dedicata 187 (2017), 17--67.

\bibitem[KSS]{kss}
L.\ Kauffman, M.\ Saito, and S.\ Sawin, 
{\em On finiteness of certain Vassiliev invariants.}
J. Knot Theory Ramifications 6 (1997), no.2, 291--297.

\bibitem[Kan]{kan}
T.\ Kanenobu, 
{\em Infinitely many knots with the same polynomial invariant.}
Proc. Amer. Math. Soc.97(1986), no.1, 158--162.

\bibitem[Kaw]{ka}
A.\ Kawauchi, 
{\em Almost identical link imitations and the skein polynomial.}
Knots 90 (Osaka, 1990), 465--476.
Walter de Gruyter \& Co., Berlin, 1992

\bibitem[Ku]{ku}
G.\ Kuperberg, 
{\em Detecting knot invertibility.}
J. Knot Theory Ramifications 5 (1996), no.2, 173--181.

\bibitem[Lo]{lo}
D.\ Long, 
{\em A note on the normal subgroups of mapping class groups.}
Math. Proc. Cambridge Philos. Soc. 99 (1986), no.1, 79--87.

\bibitem[Ma]{ma}
A.\ Malyutin, 
{\em Writhe of (closed) braids.}
St. Petersburg Math. J. 16 (2005), no.5, 791--813.

\bibitem[Mi]{mi}
Y.\ Miyazawa, 
{\em A fake HOMFLY polynomial of a knot.}
Osaka J. Math.50(2013), no.4, 1073--1096.

\bibitem[Mo]{mo}
H. \ Morton,
{\em Seifert circles and knot polynomials,}
Math. Proc. Cambridge Philos. Soc. 99 (1986), no. 1, 107--109.

\bibitem[MB]{mb}
H.\ Morton, and E.\ Beltrami, 
{\em Arc index and the Kauffman polynomial.}
Math. Proc. Cambridge Philos. Soc. 123 (1998), no.1, 41--48.

\bibitem[Pa]{pa}
A.\ Papadopoulos, 
{\em Diff\'eomorphismes pseudo-Anosov et automorphismes symplectiques de l'homologie.}
Ann. Sci. \'Ecole Norm. Sup. (4) 15(1982), no.3, 543--546.


\bibitem[St]{st}
T.\ Stanford, 
{\em Braid commutators and Vassiliev invariants.}
Pacific J. Math. 174 (1996), no.1, 269–276


\bibitem[Ta1]{ta1}
H.\ Takioka, 
{\em On the braid index of Kanenobu knots.}
Kyungpook Math. J.55(2015), no.1, 169--180.

\bibitem[Ta2]{ta2}
H.\ Takioka, 
{\em Infinitely many knots with the trivial (2,1)-cable $\Gamma$-polynomial.}
J. Knot Theory Ramifications 27 (2018), no.2, 1850013, 18 pp.
\bibitem[Th]{th}
M.\ Thistlethwaite, 
{\em Kauffman's polynomial and alternating links.}
Topology 27 (1988), no.3, 311--318.

\end{thebibliography}
\end{document}